\theoremstyle{plain}
\newtheorem{thm}{Theorem}
\newtheorem{crt}{Criterion}
\theoremstyle{definition}
\newtheorem{remark}{Remark}
\newtheorem{defin}{Definition}
\newtheorem{ex}{Example}
\newcolumntype{d}[1]{D{.}{.}{#1}}
\DeclareMathOperator{\var}{Var}
\begin{document}
\begin{frontmatter}

\title{A criterion for testing hypotheses about the covariance function
of a stationary Gaussian stochastic process}

\author[a]{\inits{Yu.}\fnm{Yuriy}\snm{Kozachenko}}\email{ykoz@ukr.net}
\author[b]{\inits{V.}\fnm{Viktor}\snm{Troshki}\corref{cor1}}\email{btroshki@ukr.net}
\cortext[cor1]{Corresponding author.}
\address[a]{Taras Shevchenko National University of Kyiv, Kyiv, Ukraine}
\address[b]{Uzhhorod National University, Uzhhorod, Ukraine}

\markboth{Yu. Kozachenko, V. Troshki}{A criterion for testing
hypotheses about the covariance function}

\begin{abstract}
We consider a measurable stationary Gaussian stochastic process.
A~criterion for testing hypotheses about the covariance function of
such a~process using estimates for its norm in the space $L_p(\mathbb
{T}),\, p\geq1$, is constructed.
\end{abstract}

\begin{keyword}
Square Gaussian stochastic process\sep
criterion for testing hypotheses\sep
correlogram
\MSC[2010] 60G10 \sep62M07
\end{keyword}

\received{21 November 2014}
%
\revised{18 January 2015}
%
\accepted{19 January 2015}
\publishedonline{29 January 2015}
\end{frontmatter}

\section{Introduction}\label{sec1}

We construct a criterion for testing the hypothesis that the
covariance function of measurable real-valued stationary Gaussian
stochastic process $X(t)$ equals~$\rho(\tau)$. We shall use the correlogram
\[
\hat{\rho}(\tau)=\frac{1}{T}\int\limits
_0^{T}X(t+\tau)X(t)dt,\quad 0\leq \tau\leq T,
\]
as an estimator of the function $\rho(\tau)$.

A lot of papers so far have been dedicated to estimation of covariance
function with given accuracy in the uniform metric, in particular, the
papers \cite{B.,B.Z.,I.,K.O.,K.St.} and
the book \cite{L.I.}.
We also note that similar estimates of Gaussian stochastic processes
were obtained in books \cite{D.V.} and \cite{B.P.}. The main properties
of the correlograms of stationary Gaussian stochastic processes were
studied by Buldygin and Kozachenko~\cite{B.K.}.\looseness=1

The definition of a square Gaussian random vector was introduced by
Kozachenko and Moklyachuk \cite{K.M.}. Applications of the theory of
square Gaussian random variables and stochastic processes in
mathematical statistics were considered in the paper \cite{K.K.} and in
the book \cite{B.K.}. In the papers \cite{F.} and \cite{K.F.},
~Kozachenko and Fedoryanich constructed a criterion for testing
hypotheses about the covariance function of a Gaussian stationary
process with given accuracy and reliability in $L_2(\mathbb{T})$.

Our goal is to estimate the covariance function $\rho(\tau)$ of a
Gaussian stochastic process with given accuracy and reliability in
$L_p(\mathbb{T}),\, p\geq1$. Also, we obtain the estimate for the norm
of square Gaussian stochastic processes in the space~$L_p(\mathbb{T})$.
We use this estimate for constructing a criterion for testing
hypotheses about the covariance function of a Gaussian stochastic process.

The article is organized as follows. In Section \ref{sec2}, we give necessary
information about the square Gaussian random variables. In Section~\ref{sec3},
we obtain an estimate for the norm of square Gaussian stochastic
processes in the space~$L_p(\mathbb{T})$. In Section \ref{sec4}, we propose a
criterion for testing a hypothesis about the covariance function of a
stationary Gaussian stochastic process based on the estimate obtained
in Section~\ref{sec3}.\looseness=1
\vspace*{3pt}

\section{Some information about the square Gaussian random variables\\
and~processes}\label{sec2}
\vspace*{3pt}

\begin{defin}[\cite{B.K.}]
Let $\mathbb{T}$ be a parametric set, and let $\varXi=\{\xi_t:t\in\mathbf
{T}\}$ be a~family of Gaussian random variables such that $\mathbf{E}\xi
_t=0$. The space $\mathit{SG}_{\varXi}(\varOmega)$ is called a space of square
Gaussian random variables if any $\zeta\in \mathit{SG}_{\varXi}(\varOmega)$ can be
represented as
\[
\zeta=\bar{\xi}^{T}A\bar{\xi}-\mathbb{E}\bar{\xi}^{T}A\bar{
\xi},
\]
where $\bar{\xi}=(\xi_1,\ldots,\xi_N)^{T}$ with $\xi_k\in\varXi,\,
k=1,\ldots,n$, and $A$ is an arbitrary matrix with real-valued entries,
or if $\zeta\in \mathit{SG}_{\varXi}(\varOmega)$ has the representation
\[
\zeta=\lim\limits
_{n\rightarrow\infty} \bigl(\bar{\xi}^{T}_nA\bar{\xi
}_n-\mathbb{E}\bar{\xi}^{T}_nA\bar{
\xi}_n \bigr).
\]
\end{defin}

\begin{thm}[\cite{B.K.}]
Assume that $\zeta\in \mathit{SG}_{\varXi}(\varOmega)$ and $\var\zeta>0$. Then the
following inequality holds for $|s|<1$:
%
\begin{equation}
\label{2} \mathbf{E}\exp \biggl\{\frac{s}{\sqrt{2}} \biggl(\frac{\zeta}{\sqrt{\var
\zeta}}
\biggr) \biggr\}\leq\frac{1}{
\sqrt{1-|s|}}\exp \biggl\{-\frac{|s|}{2} \biggr
\}=L_0(s).
\end{equation}
\end{thm}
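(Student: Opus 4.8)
The goal is to prove the Laplace‑transform bound \eqref{2} for a square Gaussian random variable $\zeta\in SG_\varXi(\varOmega)$ with $\var\zeta>0$. The natural approach is to reduce to the simplest possible case — a finite quadratic form in i.i.d. standard normals — diagonalize it, and then do a one‑dimensional Gaussian integral.

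**Step-by-step approach**

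First I would handle the limiting case: by the definition of $SG_\varXi(\varOmega)$, a general $\zeta$ is an $L_2$-limit of elements of the form $\bar\xi_n^T A\bar\xi_n - \mathbb{E}\bar\xi_n^T A\bar\xi_n$, and after passing to a subsequence we may assume a.s.\ convergence; by Fatou's lemma it then suffices to establish \eqref{2} for the finite quadratic forms, provided the bound is uniform (which it is, since the right-hand side $L_0(s)$ does not depend on $A$ or $N$). So assume $\zeta=\bar\xi^T A\bar\xi-\mathbb{E}\bar\xi^T A\bar\xi$. Replacing $A$ by its symmetric part changes nothing, so $A$ is symmetric.

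Next, I would reduce to independent standard normals. Since $\bar\xi$ is a centered Gaussian vector, write $\bar\xi = B\bar\eta$ where $\bar\eta=(\eta_1,\dots,\eta_N)$ has i.i.d.\ $N(0,1)$ entries and $BB^T$ is the covariance matrix of $\bar\xi$; then $\zeta=\bar\eta^T(B^TAB)\bar\eta-\mathbb{E}(\cdots)$ is again a quadratic form, now in i.i.d.\ normals, with a symmetric matrix $C=B^TAB$. Diagonalize $C$ by an orthogonal transformation (which preserves the law of $\bar\eta$): in new coordinates $\zeta=\sum_{k=1}^N \lambda_k(\eta_k^2-1)$ where $\lambda_k$ are the eigenvalues of $C$. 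A direct computation gives $\var\zeta = 2\sum_k\lambda_k^2$.

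Then the computation is explicit: by independence,
\[
\mathbf{E}\exp\Bigl\{\tfrac{s}{\sqrt2}\tfrac{\zeta}{\sqrt{\var\zeta}}\Bigr\}
=\prod_{k=1}^N \mathbf{E}\exp\bigl\{t_k(\eta_k^2-1)\bigr\},\qquad t_k=\frac{s\lambda_k}{\sqrt2\,\sqrt{2\sum_j\lambda_j^2}},
\]
and using $\mathbf{E}\exp\{t(\eta^2-1)\}=(1-2t)^{-1/2}e^{-t}$ for $t<1/2$, the product becomes $\prod_k(1-2t_k)^{-1/2}$ (the $e^{-t_k}$ factors cancel against $e^{\sum t_k\cdot 0}$... actually they sum to $-\sum t_k$, which need not vanish, so I'd keep them). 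The remaining task is the inequality $\prod_k (1-2t_k)^{-1/2}e^{-t_k}\le (1-|s|)^{-1/2}e^{-|s|/2}$. Writing $p_k=\lambda_k^2/\sum_j\lambda_j^2$ so that $2t_k=s\sqrt{p_k}\cdot\sqrt{p_k}\cdot(\text{const})$... more cleanly, $\sum_k (2t_k)^2 = s^2$ after simplification, and one checks $\sum_k|2t_k|\le|s|$ by Cauchy–Schwarz (since $\sum p_k=1$). The function $g(u)=-\tfrac12\log(1-u)-\tfrac{u}{2}$ is convex on $(-1,1)$ with $g(0)=g'(0)=0$; combined with the constraint $\sum|2t_k|\le|s|$ and a domination argument (worst case concentrates all mass at one coordinate), one obtains $\sum_k g(2t_k)\le g(|s|)$, which is exactly the claim.

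**Main obstacle**

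The analytic heart — and the step I expect to require the most care — is the final inequality $\sum_k g(2t_k)\le g(|s|)$ under the constraint from Cauchy–Schwarz. One must verify that the eigenvalue configuration maximizing the left side is the degenerate one (a single nonzero $\lambda_k$), which is where $|s|<1$ is needed to keep everything in the domain of $g$; handling possibly negative $t_k$ (when $\lambda_k<0$) and the sign of $s$ cleanly is the fiddly part. Everything before that — diagonalization, the elementary MGF of $\eta^2$, and the Fatou reduction — is routine.
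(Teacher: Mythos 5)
The paper itself offers no proof of this theorem --- it is quoted from the book of Buldygin and Kozachenko \cite{B.K.} --- so your attempt can only be judged on its own terms. Your overall architecture (Fatou reduction to finite quadratic forms, diagonalization to $\zeta=\sum_k\lambda_k(\eta_k^2-1)$ with $\var\zeta=2\sum_k\lambda_k^2$, the exact moment generating function $\mathbf{E}e^{t(\eta^2-1)}=(1-2t)^{-1/2}e^{-t}$, and reduction to a scalar inequality for $g(u)=-\tfrac12\log(1-u)-\tfrac u2$) is the standard and correct route. But the analytic heart, which you yourself identify as the delicate step, contains a genuine error: with $p_k=\lambda_k^2/\sum_j\lambda_j^2$ you have $|2t_k|=|s|\sqrt{p_k}$, so $\sum_k|2t_k|=|s|\sum_k\sqrt{p_k}\geq|s|\sqrt{\sum_k p_k}=|s|$, with equality only when a single $\lambda_k$ is nonzero. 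Your claimed constraint $\sum_k|2t_k|\leq|s|$ is therefore backwards (take two equal eigenvalues: $\sum_k|2t_k|=\sqrt{2}\,|s|$), and the superadditivity-of-convex-$g$ argument you intend to hang on it cannot be run.

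The conclusion $\sum_k g(2t_k)\leq g(|s|)$ is nonetheless true, and the correct constraints are the ones you also wrote down: $\sum_k(2t_k)^2=s^2$ together with $|2t_k|\leq|s|<1$ for each $k$. The fix is to prove the pointwise bound $g(u)\leq\frac{u^2}{s^2}\,g(|s|)$ for $|u|\leq|s|$, which follows from two elementary facts: $g(-v)\leq g(v)$ for $v\in[0,1)$ (compare the power series, or note $g(v)-g(-v)=\tfrac12\log\frac{1+v}{1-v}-v\geq0$), and $g(x)/x^2=\tfrac12\sum_{n\geq2}x^{n-2}/n$ is nondecreasing on $(0,1)$. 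Summing the pointwise bound over $k$ and using $\sum_k(2t_k)^2=s^2$ gives exactly $\sum_k g(2t_k)\leq g(|s|)$; this also makes rigorous your (correct) intuition that the worst case concentrates all spectral mass in one eigenvalue. With that substitution, and the minor bookkeeping that $L_2$-convergence of $\zeta_n$ gives $\var\zeta_n\to\var\zeta$ so that Fatou applies along an a.s.\ convergent subsequence, the proof is complete.
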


\begin{defin}[\cite{B.K.}]
A stochastic process $Y$ is called a square Gaussian stochastic process
if for each $t\in\mathbb{T}$, the random variable $Y(t)$ belongs to
the space $\mathit{SG}_{\varXi}(\varOmega)$.
\end{defin}

\section{An estimate for the $L_p(\mathbb{T})$ norm of a square
Gaussian stochastic process}\label{sec3}

In the following theorem, we obtain an estimate for the norm of square
Gaussian stochastic processes in the space $L_p(\mathbb{T})$. We shall
use this result for constructing a criterion for testing hypotheses
about the covariance function of a~Gaussian stochastic process.

\begin{thm}\label{4}
Let $\{\mathbb{T}, \mathfrak{A}, \mu\}$be a measurable space, where
$\mathbb{T}$ is a parametric set, and let $Y=\{Y(t), t\in\mathbb{T}\}$
be a square Gaussian stochastic process. Suppose that $Y$ is a
measurable process. Further, let the Lebesgue integral $\int
_{\mathbb{T}}(\mathbf{E}Y^2(t))^{\frac{p}{2}}d\mu(t)$ be well defined
for $p\geq1$. Then the integral $\int
_{\mathbb{T}}(Y(t))^pd\mu
(t)$ exists with probability~$1$, and
%
\begin{equation}
\label{1} P \biggl\{\int\limits
_{\mathbb{T}}\bigl| Y(t)\bigr|^pd\mu(t)>\varepsilon \biggr\}
\leq2\sqrt{1+\frac{\varepsilon^{1/p}\sqrt{2}}{C_p^{\frac{1}{p}}}}\exp \biggl\{-\frac{\varepsilon^{\frac{1}{p}}}{\sqrt{2}C_p^{\frac
{1}{p}}} \biggr\}
\end{equation}
for all $\varepsilon\geq (\frac{p}{\sqrt{2}}+\sqrt{(\frac
{p}{2}+1)p} )^{p}C_p$, where $C_p=\int
_{\mathbb{T}}(\mathbf
{E}Y^2(t))^{\frac{p}{2}}d\mu(t)$.
\end{thm}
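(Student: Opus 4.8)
The plan is to convert the exponential bound \eqref{2} into moment estimates for the individual random variables $Y(t)$, to lift these to moment estimates for the nonnegative random variable $\xi:=\int_{\mathbb{T}}|Y(t)|^{p}\,d\mu(t)$ via Tonelli's theorem and the generalized H\"older inequality, and finally to optimize a Markov-type tail bound over the order of the moment used. Throughout one may discard the (possibly $\mu$-null or, if not, harmless) set of $t$ with $\var Y(t)=0$, since there $Y(t)=0$ a.s.\ and the contribution to both $\xi$ and $C_{p}$ vanishes; on the remaining set $\sigma^{2}(t):=\mathbf{E}Y^{2}(t)=\var Y(t)>0$ and \eqref{2} applies.

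First I would derive, for every real $q\ge1$, a bound of the form $\mathbf{E}|Y(t)|^{q}\le D_{q}\,(\mathbf{E}Y^{2}(t))^{q/2}$ with an explicit constant $D_{q}$. Indeed, applying \eqref{2} with parameter $s$ and with $-s$ and using $e^{|x|}\le e^{x}+e^{-x}$, one gets for $0<a<(\sqrt{2}\,\sigma(t))^{-1}$ that $\mathbf{E}\exp\{a|Y(t)|\}\le 2L_{0}(\sqrt{2}\,\sigma(t)a)\le 2\bigl(1-\sqrt{2}\,\sigma(t)a\bigr)^{-1/2}$. Combining this with the elementary estimate $x^{q}\le (q/a)^{q}e^{-q}e^{ax}$, $x\ge0$, and then choosing $a$ optimally in $\bigl(0,(\sqrt2\,\sigma(t))^{-1}\bigr)$, produces the claimed inequality $\mathbf{E}|Y(t)|^{q}\le D_{q}\,(\mathbf{E}Y^{2}(t))^{q/2}$.

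Next, since $Y$ is a measurable process, $(t,\omega)\mapsto|Y(t,\omega)|^{p}$ is a nonnegative measurable function, so Tonelli gives $\mathbf{E}\xi=\int_{\mathbb{T}}\mathbf{E}|Y(t)|^{p}\,d\mu(t)\le D_{p}\,C_{p}<\infty$; hence $\xi<\infty$ with probability $1$, which is the asserted a.s.\ existence of the integral. For the tail, fix a positive integer $m$ and expand $\xi^{m}=\int_{\mathbb{T}^{m}}\prod_{i=1}^{m}|Y(t_{i})|^{p}\,d\mu(t_{1})\cdots d\mu(t_{m})$; taking expectations (Tonelli again) and estimating the integrand by H\"older's inequality with $m$ factors, $\mathbf{E}\prod_{i=1}^{m}|Y(t_{i})|^{p}\le\prod_{i=1}^{m}\bigl(\mathbf{E}|Y(t_{i})|^{pm}\bigr)^{1/m}\le D_{pm}\prod_{i=1}^{m}(\mathbf{E}Y^{2}(t_{i}))^{p/2}$ by the step above, one obtains after integration that $\mathbf{E}\xi^{m}\le D_{pm}\,C_{p}^{m}$. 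Markov's inequality then yields $P\{\xi>\varepsilon\}\le\varepsilon^{-m}\mathbf{E}\xi^{m}\le D_{pm}\,(C_{p}/\varepsilon)^{m}$ for every $m\ge1$. Writing $C_{p}/\varepsilon=(\varepsilon^{1/p}/C_{p}^{1/p})^{-p}$, inserting the explicit form of $D_{pm}$, and minimizing the right-hand side over admissible $m$ gives \eqref{1}; the condition $\varepsilon\ge\bigl(\tfrac{p}{\sqrt2}+\sqrt{(\tfrac p2+1)p}\bigr)^{p}C_{p}$ is precisely what is needed so that the optimizing $m$ may be taken to be an integer $\ge1$.

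The main obstacle is this last optimization: the moment order $m$ must be a positive integer, so minimizing $m\mapsto D_{pm}(C_{p}/\varepsilon)^{m}$ is a discrete problem, and extracting the clean closed form \eqref{1} together with the exact admissibility threshold on $\varepsilon$ requires a careful, non-asymptotic analysis of this function rather than merely locating its continuous minimizer. A minor secondary point is the measurability bookkeeping needed to justify the two applications of Tonelli, which is supplied by the hypothesis that $Y$ is a measurable process.
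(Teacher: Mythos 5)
Your first two steps match the paper: the moment bound $\mathbf{E}|\zeta|^{\alpha}\le 2L_0(s)\bigl(\sqrt{2\mathbf{E}\zeta^2}/s\bigr)^{\alpha}\alpha^{\alpha}e^{-\alpha}$ is obtained there exactly as you describe (from $x^{\alpha}\le\alpha^{\alpha}e^{-\alpha}e^{x}$ applied to $x=\tfrac{s}{\sqrt2}|\zeta|/\sqrt{\mathbf{E}\zeta^2}$ together with \eqref{2} at $\pm s$), and the a.s.\ existence of the integral follows from Tonelli as you say. The difficulty is in your third step, and it is a genuine gap, not merely a technical nuisance: by bounding $\mathbf{E}\xi^{m}$ through the $m$-fold product expansion and H\"older's inequality you restrict yourself to \emph{integer} moment orders $m$, and the resulting discrete minimization of $m\mapsto D_{pm}(C_p/\varepsilon)^{m}$ cannot reproduce \eqref{1}. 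The continuous minimizer is $l^{*}=\frac{s\varepsilon^{1/p}}{\sqrt2\,pC_p^{1/p}}$, which varies continuously with $\varepsilon$ and is generically not an integer; rounding it to the nearest admissible integer perturbs the bound by $\varepsilon$-dependent factors that are absent from \eqref{1}. Moreover, your closing claim that the hypothesis $\varepsilon\ge\bigl(\tfrac{p}{\sqrt2}+\sqrt{(\tfrac p2+1)p}\bigr)^{p}C_p$ is ``precisely what is needed so that the optimizing $m$ may be taken to be an integer'' is not correct: in the paper that condition is exactly the requirement $l^{*}\ge1$ for the \emph{continuous} minimizer (after substituting the optimal $s^{*}$), and it says nothing about integrality.

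The missing idea is to replace the product-expansion/H\"older step by Minkowski's integral inequality (the ``generalized Minkowski inequality''),
\[
\biggl(\mathbf{E}\biggl(\int\limits_{\mathbb{T}}\bigl|Y(t)\bigr|^{p}d\mu(t)\biggr)^{l}\biggr)^{1/l}\le\int\limits_{\mathbb{T}}\bigl(\mathbf{E}\bigl|Y(t)\bigr|^{pl}\bigr)^{1/l}d\mu(t),
\]
which is valid for every real $l\ge1$. This yields $\mathbf{E}\xi^{l}\le 2L_0(s)\,2^{pl/2}(pl)^{pl}e^{-pl}s^{-pl}C_p^{l}$ for all real $l\ge1$, and then Chebyshev's inequality gives $P\{\xi>\varepsilon\}\le 2L_0(s)a^{l}(l^{p})^{l}$ with $a=2^{p/2}p^{p}C_p/(e^{p}s^{p}\varepsilon)$. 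One may now minimize freely over real $l$ (giving $l^{*}=1/(ea^{1/p})$) and afterwards over $s\in(0,1)$ (giving $s^{*}=1-(1+\sqrt2\,\varepsilon^{1/p}/C_p^{1/p})^{-1}$), which produces the closed form \eqref{1}; the stated threshold on $\varepsilon$ is then read off from $l^{*}\ge1$. Note also that for this to work you must carry the parameter $s$ through the whole computation and optimize it last, rather than optimizing it inside each constant $D_q$ as your first step suggests; otherwise the clean factor $2L_0(s^{*})=2\sqrt{1+\sqrt2\,\varepsilon^{1/p}/C_p^{1/p}}$ does not emerge.
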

\begin{proof}
Since $\max
_{x>0}x^{\alpha}e^{-x}=\alpha^{\alpha}e^{-\alpha}$,
we have $x^{\alpha}e^{-x}\leq\alpha^{\alpha}e^{-\alpha}$.

If $\zeta$ is a random variable from the space $\mathit{SG}_{\varXi}(\varOmega)$ and
$x=\frac{s}{\sqrt{2}}\cdot\frac{|\zeta|}{\sqrt{\mathbf{E}\zeta^2}}$,
where $s>0$, then
\[
\mathbf{E} \biggl(\frac{s}{\sqrt{2}}\frac{|\zeta|}{\sqrt{\mathbf{E}\zeta
^2}} \biggr)^{\alpha}\leq
\alpha^{\alpha}e^{-\alpha}\cdot\mathbf{E}\exp \biggl\{\frac{s}{\sqrt{2}}
\frac{|\zeta|}{\sqrt{\mathbf{E}\zeta^2}} \biggr\}
\]
and
\[
\mathbf{E}|\zeta|^{\alpha}\leq \biggl(\frac{\sqrt{2\mathbf{E}\zeta
^2}}{s}
\biggr)^{\alpha}\alpha^{\alpha}e^{-\alpha}\mathbf{E}\exp \biggl\{
\frac{s}{\sqrt{2}}\frac{|\zeta|}{\sqrt{\mathbf{E}\zeta^2}} \biggr\}.
\]
From inequality (\ref{2}) for $0<s<1$ we get that
\begin{align}
\label{6} \mathbf{E}|\zeta|^{\alpha}&\leq \biggl(\frac{\sqrt{2\mathbf{E}\zeta
^2}}{s}
\biggr)^{\alpha}\alpha^{\alpha}e^{-\alpha} \biggl(\mathbf{E}\exp
\biggl\{\frac{s}{\sqrt{2}}\frac{\zeta}{\sqrt{\mathbf{E}\zeta^2}} \biggr\}+ \mathbf{E}\exp \biggl\{-
\frac{s}{\sqrt{2}}\frac{\zeta}{\sqrt{\mathbf
{E}\zeta^2}} \biggr\} \biggr)
\nonumber
\\
& \leq\frac{2}{\sqrt{1-s}} \biggl(\frac{\sqrt{2\mathbf{E}\zeta^2}}{s} \biggr)^{\alpha}
\alpha^{\alpha}e^{-\alpha}\exp \biggl\{-\frac{s}{\sqrt{2}} \biggr\}
\nonumber
\\
& =2L_0(s) \biggl(\frac{\sqrt{2\mathbf{E}\zeta^2}}{s} \biggr)^{\alpha}\alpha
^{\alpha}e^{-\alpha}.
\end{align}

Let $Y(t),\, t\in\mathbb{T}$, be a measurable square Gaussian
stochastic process. Using the Chebyshev inequality, we derive that, for
all $l\geq1$,
\[
P \biggl\{\int\limits
_{\mathbb{T}}\bigl| Y(t)\bigr|^pd\mu(t)>\varepsilon \biggr\}\leq
\frac{\mathbf{E} (\int
_{\mathbb{T}}| Y(t)|^pd\mu(t)
)^l}{\varepsilon^l}.
\]
Then from the generalized Minkowski inequality together with
inequality~(\ref{6}) for \mbox{$l>1$} we obtain that
\begin{align}
& \biggl(\mathbf{E} \biggl(\int\limits
_{\mathbb{T}}\bigl| Y(t)\bigr|^pd\mu(t)
\biggr)^l \biggr)^{\frac{1}{l}}\leq\int\limits
_{\mathbb{T}} \bigl(\mathbf{E}\bigl|
Y(t)\bigr|^{pl} \bigr)^{\frac{1}{l}}d\mu(t)
\nonumber
\\
& \leq\int\limits
_{\mathbb{T}}\bigl(2L_0(s) \bigl(2\mathbf{E}Y^2(t)
\bigr)^{\frac
{pl}{2}}(pl)^{pl}s^{-pl}\exp\{-pl\}
\bigr)^{\frac{1}{l}}d\mu(t)
\nonumber
\\
& =\bigl(2L_0(s)\bigr)^{\frac{1}{l}}\int\limits
_{\mathbb{T}}\bigl(2\mathbf
{E}Y^2(t)\bigr)^{\frac{p}{2}}s^{-p}(pl)^p\exp
\{-p\}d\mu(t)
\nonumber
\\
& =\bigl(2L_0(s)\bigr)^{\frac{1}{l}}2^{\frac{p}{2}}s^{-p}(pl)^p
\exp\{-p\}\int\limits
_{\mathbb{T}}\bigl(\mathbf{E}Y^2(t)\bigr)^{\frac{p}{2}}d
\mu(t).
\nonumber
\end{align}

Assuming that $C_p=\int
_{\mathbb{T}}(\mathbf{E}Y^2(t))^{\frac
{p}{2}}d\mu(t)$, we deduce that
\[
\mathbf{E} \biggl(\int\limits
_{\mathbb{T}}\bigl| Y(t)\bigr|^pd\mu(t)
\biggr)^l\leq 2L_0(s)2^{\frac{pl}{2}}(lp)^{pl}
\exp\{-pl\}C_p^ls^{-pl}.
\]
Hence,
\begin{align}
P \biggl\{\int\limits
_{\mathbb{T}}\bigl| Y(t)\bigr|^pd\mu(t)>\varepsilon \biggr\} &\leq2
\cdot\bigl(2^{\frac{p}{2}}\bigr)^lL_0(s)
\bigl(p^p\bigr)^l\bigl(\exp\{-p\} \bigr)^lC_p^l
\bigl(s^{-p}\bigr)^l\cdot\frac{(l^p)^l}{\varepsilon^l}
\nonumber
\\
& =2L_0(s) a^l\bigl(l^p\bigr)^l
\nonumber
,
\end{align}
where $a=\frac{2^{\frac{p}{2}}p^pC_p}{e^ps^p\varepsilon}$, that is,
$a^{\frac{1}{p}}=\frac{2^{\frac{1}{2}}pC_p^{\frac{1}{p}}}{es\varepsilon
^{\frac{1}{p}}}$. Let us find the minimum of the function $\psi
(l)=a^l(l^p)^l$. We can easily check that it reaches its minimum at the point
$l^{*}=\frac{1}{ea^{\frac{1}{p}}}$.

Then
\begin{align}
2L_0(s) \psi\bigl(l^{*}\bigr)&=2L_0(s)
a^{\frac{1}{ea^{\frac{1}{p}}}}\cdot \biggl(\frac{1}{ea^{\frac{1}{p}}} \biggr)^{p\cdot\frac{1}{ea^{\frac
{1}{p}}}}=2L_0(s)
a^{\frac{1}{ea^{\frac{1}{p}}}}\cdot a^{-\frac
{1}{ea^{\frac{1}{p}}}}\cdot e^{-\frac{p}{ea^{\frac{1}{p}}}}
\nonumber
\\
& =2L_0(s) \exp \biggl\{-\frac{pes\varepsilon^{\frac{1}{p}}}{2^{\frac
{1}{2}}peC_p^{\frac{1}{p}}} \biggr
\}=2L_0(s) \exp \biggl\{-\frac
{s\varepsilon^{\frac{1}{p}}}{2^{\frac{1}{2}}C_p^{\frac{1}{p}}} \biggr\}
\nonumber
\\
& =\frac{2}{\sqrt{1-s}}\exp \biggl\{-s \biggl(\frac{1}{2}+\frac{\varepsilon
^{1/p}}{2^{\frac{1}{2}}C_p^{\frac{1}{p}}}
\biggr) \biggr\}.
\nonumber
\end{align}

In turn, minimizing the function $\theta(s)=\frac{2}{\sqrt{1-s}}\exp
 \{-s (\frac{1}{2}+\frac{\varepsilon^{1/p}}{2^{\frac
{1}{2}}C_p^{\frac{1}{p}}} ) \}$ in~$s$, we deduce
$s^*=1-\frac{1}{1+\frac{\sqrt{2}\varepsilon^{1/p}}{C_p^{1/p}}}$. Thus,
\[
\theta\bigl(s^*\bigr)=2\sqrt{1+\frac{\varepsilon^{1/p}\sqrt{2}}{C_p^{\frac
{1}{p}}}}\exp \biggl\{-
\frac{\varepsilon^{\frac{1}{p}}}{\sqrt{2}C_p^{\frac
{1}{p}}} \biggr\}.
\]\eject\noindent
Since $l^{*}\geq1$, it follows that inequality (\ref{1}) holds if $\frac
{1}{ea^{\frac{1}{p}}}=\frac{s\varepsilon^{1/p}}{\sqrt{2}pC_p^{1/p}}\geq
1$. Substituting the value of $s^*$ into this expression, we obtain the
inequality $\varepsilon^{2/p}\geq pC_p^{1/p}(C_p^{1/p}+\sqrt
{2}\varepsilon^{1/p})$. Solving this inequality with respect to
$\varepsilon>0$, we deduce that inequality (\ref{1}) holds for
$\varepsilon\geq (\frac{p}{\sqrt{2}}+\sqrt{(\frac{p}{2}+1)p}
)^{p}C_p$.
The theorem is proved.\vspace*{3pt}
\end{proof}

\section{The construction of a criterion for testing hypotheses about
the covariance\\ function of a stationary Gaussian stochastic
process}\label{sec4}\vspace*{3pt}

Consider a measurable stationary Gaussian stochastic process $X$
defined for all $t\in\mathbb{R}$. Without any loss of generality, we
can assume that $X=\{X(t)$, $t\in\mathbb{T}=[0,T+A]$, $0<T<\infty$,
$0<A<\infty\}$ and $\mathbf{E}X(t)=0$. The covariance function $\rho(\tau
)=\mathbf{E}X(t+\tau)X(t)$ of this process is defined for any $\tau\in
\mathbb{R}$ and is an even function. Let $\rho(\tau)$ be continuous on
$\mathbb{T}$.\vspace*{3pt}
\begin{thm}\label{e}
Let the correlogram
%
\begin{equation}
\label{5} \hat{\rho}(\tau)=\frac{1}{T}\int\limits
_0^{T}X(t+
\tau)X(t)dt,\quad 0\leq \tau\leq A,
\end{equation}
be an estimator of the covariance function $\rho(\tau)$. Then the
following inequality holds for all $\varepsilon\geq (\frac{p}{\sqrt
{2}}+\sqrt{(\frac{p}{2}+1)p} )^{p}C_p$\emph{:}
\[
P \biggl\{\int\limits
_0^{A}\bigl(\hat{\rho}(\tau)-\rho(\tau)
\bigr)^pd\tau >\varepsilon \biggr\}\leq2\sqrt{1+
\frac{\varepsilon^{1/p}\sqrt
{2}}{C_p^{\frac{1}{p}}}}\exp \biggl\{-\frac{\varepsilon^{\frac
{1}{p}}}{\sqrt{2}C_p^{\frac{1}{p}}} \biggr\},
\]
where $C_p=\int
_0^{A} (\frac{2}{T^2}\int
_0^{T}(T-u)(\rho^2(u)+\rho(u+\tau)\rho(u-\tau))du )^{\frac
{p}{2}}d\tau$ and $0<A<~\infty$.\vspace*{3pt}
\end{thm}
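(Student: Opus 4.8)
The plan is to recognize the centered correlogram $Y(\tau):=\hat{\rho}(\tau)-\rho(\tau)$, $\tau\in[0,A]$, as a measurable square Gaussian stochastic process and then apply Theorem~\ref{4} with $\mathbb{T}=[0,A]$ and $\mu$ the Lebesgue measure.

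First I would check that $Y$ is square Gaussian. The family $\varXi=\{X(s):s\in[0,T+A]\}$ consists of centered Gaussian variables, and for fixed $\tau$ the integral in~(\ref{5}) is a mean-square limit of Riemann sums $\frac{1}{T}\sum_k X(t_k+\tau)X(t_k)\Delta t_k$, each of which is a quadratic form $\bar{\xi}_n^{T}A_n\bar{\xi}_n$ in members of~$\varXi$ (with $A_n$ symmetric, the relevant off-diagonal entries being $\frac{\Delta t_k}{2T}$). Continuity, hence boundedness, of $\rho$ on $\mathbb{T}$ gives a uniform bound on the fourth moments $\mathbf{E}X^2(t+\tau)X^2(t)$, which secures the mean-square convergence; since $\mathbf{E}\hat{\rho}(\tau)=\frac{1}{T}\int_0^{T}\rho(\tau)dt=\rho(\tau)$, this exhibits $Y(\tau)=\lim_n(\bar{\xi}_n^{T}A_n\bar{\xi}_n-\mathbf{E}\bar{\xi}_n^{T}A_n\bar{\xi}_n)\in \mathit{SG}_{\varXi}(\varOmega)$. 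Measurability of $X$ makes the integrand in~(\ref{5}) jointly measurable in $(\tau,\omega)$, so $Y$ is a measurable process.

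Next I would compute $\mathbf{E}Y^2(\tau)=\var\hat{\rho}(\tau)$. By Fubini,
\[
\var\hat{\rho}(\tau)=\frac{1}{T^2}\int_0^{T}\int_0^{T}\bigl[\mathbf{E}X(t+\tau)X(t)X(s+\tau)X(s)-\rho^2(\tau)\bigr]dt\,ds .
\]
By the Isserlis formula for the fourth moment of centered jointly Gaussian variables and stationarity ($\mathbf{E}X(a)X(b)=\rho(a-b)$), the bracket reduces to $\rho^2(t-s)+\rho(t-s+\tau)\rho(t-s-\tau)$, and, as $\rho$ is even, both summands are even functions of $u=t-s$. The standard reduction $\int_0^{T}\int_0^{T}g(t-s)\,dt\,ds=\int_{-T}^{T}(T-|u|)g(u)\,du=2\int_0^{T}(T-u)g(u)\,du$ then gives
\[
\mathbf{E}Y^2(\tau)=\frac{2}{T^2}\int_0^{T}(T-u)\bigl(\rho^2(u)+\rho(u+\tau)\rho(u-\tau)\bigr)du ,
\]
which is exactly the integrand defining $C_p$. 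Boundedness of $\rho$ on $\mathbb{T}$ makes $\tau\mapsto\mathbf{E}Y^2(\tau)$ bounded on $[0,A]$, so $C_p<\infty$ and $\int_{[0,A]}(\mathbf{E}Y^2(\tau))^{p/2}d\tau$ is well defined; the hypotheses of Theorem~\ref{4} are therefore met.

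Applying Theorem~\ref{4} to $Y$ on $([0,A],\mathrm{Leb})$ then yields the claimed inequality for all $\varepsilon\ge(\frac{p}{\sqrt{2}}+\sqrt{(\frac{p}{2}+1)p})^{p}C_p$. The step I expect to need the most care is the first one: verifying that the mean-square limit of the approximating quadratic forms genuinely belongs to $\mathit{SG}_{\varXi}(\varOmega)$ and controlling the approximating fourth moments uniformly; the fourth-moment identity and the change of variables afterwards are routine.
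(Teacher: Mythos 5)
Your proposal is correct and follows essentially the same route as the paper: compute $\mathbf{E}(\hat{\rho}(\tau)-\rho(\tau))^2$ via the Isserlis formula and the reduction to a single integral in $u=t-s$, note that the centered correlogram is a square Gaussian process (the paper simply cites Lemma~3.1, Chapter~6 of \cite{B.K.} where you sketch the quadratic-form approximation directly), and then apply Theorem~\ref{4} on $([0,A],\mathrm{Leb})$.
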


\begin{remark}
Since the sample paths of the process $X(t)$ are continuous with
probability one on the set $\mathbb{T}$, $\hat{\rho}(\tau)$ is a
Riemann integral.\vspace*{3pt}
\end{remark}

\begin{proof}
Consider
\[
\mathbf{E}\bigl(\hat{\rho}(\tau)-\rho(\tau)\bigr)^2=\mathbf{E}\bigl(
\hat{\rho}(\tau )\bigr)^2-\rho^2(\tau).
\]
From the Isserlis equality for jointly Gaussian random variables it
follows that
\begin{align}
&{}\mathbf{E}\bigl(\hat{\rho}(\tau)\bigr)^2-\rho^2(\tau)=
\mathbf{E} \biggl(\frac
{1}{T^2}\int\limits
_0^{T}\int\limits
_0^{T}X(t+
\tau)X(t)X(s+\tau )X(s)dtds \biggr)-\rho^2(\tau)
\nonumber
\\
&\quad{} =\frac{1}{T^2}\int\limits
_0^{T}\int\limits
_0^{T}\bigl(
\mathbf{E}X(t+\tau )X(t)\mathbf{E}X(s+\tau)X(s)+\mathbf{E}X(t+\tau)X(s+\tau)
\nonumber
\\
&\qquad{} \times\mathbf{E}X(t)X(s)+\mathbf{E}X(t+\tau)X(s)\mathbf{E}X(s+\tau )X(t)
\bigr)dtds-\rho^2(\tau)
\nonumber
\\
&\quad{} =\frac{1}{T^2}\int\limits
_0^{T}\int\limits
_0^{T}\bigl(
\rho^2(\tau)+\rho ^2(t-s)+\rho(t-s+\tau)\rho(t-s-\tau)
\bigr)dtds-\rho^2(\tau)
\nonumber
\\
&\quad{} =\frac{1}{T^2}\int\limits
_0^{T}\int\limits
_0^{T}\bigl(
\rho^2(t-s)+\rho (t-s+\tau)\rho(t-s-\tau)\bigr)dtds
\nonumber
\\
&\quad{} =\frac{2}{T^2}\int\limits
_0^{T}(T-u) \bigl(\rho^2(u)+
\rho(u+\tau)\rho(u-\tau )\bigr)du.
\nonumber
\end{align}
We have obtained that
%
\begin{equation}
\label{7} \mathbf{E}\bigl(\hat{\rho}(\tau)-\rho(\tau)\bigr)^2=
\frac{2}{T^2}\int\limits
_0^{T}(T-u) \bigl(\rho^2(u)+
\rho(u+\tau)\rho(u-\tau)\bigr)du.
\end{equation}
Since $\hat{\rho}(\tau)-\rho(\tau)$ is a square Gaussian stochastic
process (see Lemma~3.1, Chapter~6 in \cite{B.K.}), it follows from
Theorem \ref{4} that
\[
P \biggl\{\int\limits
_0^{A}\bigl(\hat{\rho}(\tau)-\rho(\tau)
\bigr)^pd\tau >\varepsilon \biggr\}\leq2\sqrt{1+
\frac{\varepsilon^{1/p}\sqrt
{2}}{C_p^{\frac{1}{p}}}}\exp \biggl\{-\frac{\varepsilon^{\frac
{1}{p}}}{\sqrt{2}C_p^{\frac{1}{p}}} \biggr\}.
\]
Applying Eq.~(\ref{7}), we get
\[
C_p= \int\limits
_0^{A} \biggl(\frac{2}{T^2}\int
\limits_0^{T}(T-u)
\bigl(\rho ^2(u)+\rho(u+\tau)\rho(u-\tau)\bigr)du
\biggr)^{\frac{p}{2}}d\tau.
\]
The theorem is proved.
\end{proof}

Denote
\[
g(\varepsilon)=2\sqrt{1+\frac{\varepsilon^{1/p}\sqrt{2}}{C_p^{\frac
{1}{p}}}}\exp \biggl\{-
\frac{\varepsilon^{\frac{1}{p}}}{\sqrt{2}C_p^{\frac
{1}{p}}} \biggr\}.
\]
From Theorem \ref{e} it follows that if $\varepsilon\geq z_p=C_p
(\frac{p}{\sqrt{2}}+\sqrt{(\frac{p}{2}+1)p} )^{p}$, then
\[
P \biggl\{\int\limits
_0^{A}\bigl(\hat{\rho}(\tau)-\rho(\tau)
\bigr)^pd\tau >\varepsilon \biggr\}\leq g(\varepsilon).
\]
Let $\varepsilon_{\delta}$ be a solution of the equation $g(\varepsilon
)=\delta$, $0<\delta<1$. Put $S_{\delta}=\max\{\varepsilon_\delta, z_p\}
$. It is obvious that $g(S_{\delta})\leq\delta$ and
\begin{align}
\label{8} P \biggl\{\int\limits
_0^{A}\bigl(\hat{\rho}(\tau)-\rho(\tau)
\bigr)^pd\tau>S_{\delta
} \biggr\}\leq\delta.
\end{align}

Let $\mathbb{H}$ be the hypothesis that the covariance function of a
measurable real-valued stationary Gaussian stochastic process $X(t)$
equals $\rho(\tau)$ for $0\leq\tau\leq A$. From Theorem \ref{e} and
(\ref{8}) it follows that to test the hypothesis $\mathbb{H}$, we can
use the following criterion.

\begin{crt}\label{a}
For a given level of confidence $\delta$ the hypothesis $\mathbb{H}$ is
accepted if
\[
\int\limits
_0^{A}\bigl(\hat{\rho}(\tau)-\rho(\tau)
\bigr)^pd\mu(\tau)<S_{\delta};
\]
otherwise, the hypothesis is rejected.
\end{crt}

\begin{remark}
The equation $g(\varepsilon)=\delta$ has a solution for any $\delta>0$
since $g(\varepsilon)$ is a~decreasing function. We can find the
solution of the equation using numerical methods.
\end{remark}

\begin{remark}
We can easily see that Criterion \ref{a} can be used if $C_p\rightarrow
0$ as $T\rightarrow\infty$.
\end{remark}

The next theorem contain assumptions under which $C_p\to0$ as
$T\rightarrow\infty$.
\begin{thm}
Let $\rho(\tau)$ be the covariance function of a centered stationary
random process. Let $\rho(\tau)$ be a continuous function. If $\rho
(T)\rightarrow0$ as $T\rightarrow\infty$, then $C_p\rightarrow0$ as
$T\rightarrow\infty$, where $C_p=\int
_0^{A}(\psi(T,\tau))^{p/2}dt$ and
\[
\psi(T,\tau)=\frac{2}{T^2}\int\limits
_0^{T}(T-u) \bigl(
\rho^2(u)+\rho(u+\tau )\rho(u-\tau)\bigr)du,\quad A>0,\ T>0.
\]
\end{thm}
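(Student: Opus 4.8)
The plan is to show that $\psi(T,\tau)\to 0$ uniformly (or at least boundedly) in $\tau\in[0,A]$ as $T\to\infty$, and then conclude $C_p\to 0$ by dominated convergence (the $\tau$-integral being over the fixed finite interval $[0,A]$). First I would record that $\rho$ is bounded: since $\rho$ is continuous and $\rho(T)\to 0$, and $|\rho(u)|\le\rho(0)$ for all $u$ by the Cauchy--Schwarz inequality for covariances, we have $M:=\sup_{u}|\rho(u)|=\rho(0)<\infty$. Hence the integrand $\rho^2(u)+\rho(u+\tau)\rho(u-\tau)$ is bounded in absolute value by $2M^2$, which gives the crude bound $\psi(T,\tau)\le \frac{2}{T^2}\int_0^T (T-u)\,2M^2\,du = 2M^2$; this uniform bound in $(T,\tau)$ is what will later justify passing the limit inside $\int_0^A(\cdot)^{p/2}\,d\tau$.

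Next I would prove the pointwise convergence $\psi(T,\tau)\to 0$ for each fixed $\tau$. The key elementary fact is that if $h$ is a bounded function with $h(u)\to 0$ as $u\to\infty$, then $\frac{2}{T^2}\int_0^T (T-u) h(u)\,du \to 0$ as $T\to\infty$; this is a standard Cesàro/Toeplitz-type averaging argument (split the integral at a large fixed $u_0$ beyond which $|h(u)|<\eta$, bound the tail by $\eta$, and note the head contributes $O(u_0/T)\to 0$). I would apply this with $h(u)=\rho^2(u)$ and with $h(u)=\rho(u+\tau)\rho(u-\tau)$: the first tends to $0$ because $\rho(u)\to 0$, and the second because $\rho(u+\tau)\to 0$ while $\rho(u-\tau)$ stays bounded by $M$ (here $\tau\le A$ is fixed). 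Therefore $\psi(T,\tau)\to 0$ for every $\tau\in[0,A]$.

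Finally, combining the two ingredients: for each $\tau\in[0,A]$ the function $(\psi(T,\tau))^{p/2}\to 0$ as $T\to\infty$, and $(\psi(T,\tau))^{p/2}\le (2M^2)^{p/2}$ uniformly, which is integrable over the finite interval $[0,A]$. By the Lebesgue dominated convergence theorem,
\[
C_p=\int_0^{A}\bigl(\psi(T,\tau)\bigr)^{p/2}\,d\tau \longrightarrow 0 \quad\text{as } T\to\infty,
\]
which is the claim. The main obstacle is the averaging lemma $\frac{2}{T^2}\int_0^T(T-u)h(u)\,du\to 0$ for bounded $h$ vanishing at infinity; everything else (boundedness of $\rho$, the uniform domination, the DCT application) is routine. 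One should also note that the decomposition requires $\rho(u-\tau)$ to be controlled for $u$ near $0$, but this is fine since $\rho$ is bounded everywhere by $M$; no additional hypothesis beyond continuity and $\rho(T)\to 0$ is needed.
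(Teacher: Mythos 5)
Your proof is correct, and its overall skeleton (a uniform bound on $\psi(T,\tau)$, pointwise convergence $\psi(T,\tau)\to 0$, then Lebesgue dominated convergence over the finite interval $[0,A]$) coincides with the paper's. The one genuine difference is the tool used for the pointwise limit. The paper first bounds $\psi(T,\tau)$ by $\frac{2}{T}\int_0^T\bigl(\rho^2(u)+\rho(u+\tau)\rho(u-\tau)\bigr)\,du$ and then evaluates the limit of that Ces\`aro mean by L'Hopital's rule, reducing it to $\lim_{T\to\infty}\bigl(\rho^2(T)+\rho(T+\tau)\rho(T-\tau)\bigr)=0$. You instead prove the elementary averaging lemma that $\frac{2}{T^2}\int_0^T(T-u)h(u)\,du\to 0$ for any bounded $h$ vanishing at infinity, by splitting the integral at a fixed $u_0$, and apply it directly to the two terms $\rho^2(u)$ and $\rho(u+\tau)\rho(u-\tau)$. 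Your route is slightly more self-contained: it works with the weighted average $\frac{2}{T^2}\int_0^T(T-u)(\cdot)\,du$ itself rather than passing to the unweighted upper bound (which, taken literally, only squeezes $\psi$ from above and implicitly uses $\psi\ge 0$, a fact guaranteed by $\psi(T,\tau)=\mathbf{E}(\hat\rho(\tau)-\rho(\tau))^2$), and your choice of $u_0$ can be made uniform in $\tau\in[0,A]$ since $u+\tau\ge u$, so you even get uniform convergence and could dispense with DCT altogether. The paper's L'Hopital computation is shorter but requires differentiating the parameter-dependent integral. Both arguments are valid; the constants in the domination differ trivially ($2\rho^2(0)$ versus the paper's $4\rho^2(0)$), which is immaterial.
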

\begin{proof}
We have $\psi(T,\tau)\leq\frac{2}{T}\int
_0^{T}(\rho^2(u)+\rho
(u+\tau)\rho(u-\tau))du\leq4\rho^2(0)$. Now it is suffices to prove that
$\psi(T,\tau)\rightarrow0$ as $T\rightarrow\infty$.
From the L'Hopital's rule it follows that
\begin{align}
\lim\limits
_{T\rightarrow\infty}\psi(T,\tau)&= \lim\limits
_{T\rightarrow\infty}\frac{2}{T}\int\limits
_0^{T}
\bigl(\rho^2(u)+\rho(u+\tau )\rho(u-\tau)\bigr)du
\nonumber
\\
& = \lim\limits
_{T\rightarrow\infty}\bigl(\rho^2(T)+\rho(T+\tau)\rho(T-\tau )
\bigr)=0.
\nonumber
\end{align}

Application of Lebesgue's dominated convergence theorem completes the proof.
\end{proof}

Here are examples in which we find the estimates for $C_p$.

\begin{ex}
Let $\mathbb{H}$ be the hypothesis that the covariance function of a
centered measurable stationary Gaussian stochastic process equals $\rho
(\tau)=B\exp\{-a|\tau|\}$, where $B>0$ and $a>0$.

To test the hypothesis $\mathbb{H}$, we can use Criterion \ref{a} by
selecting $\hat{\rho}_T(\tau)$ that is defined in (\ref{5}) as an
estimator of the function $\rho(\tau)$. Let $0<A<\infty$. We shall find
the value of the expression
\begin{align*}
I&{}=\int\limits_0^{T}(T-u) \bigl(e^{-2au}+e^{-a| u+\tau|}e^{-a| u-\tau|}\bigr)du \\
 &{}=\int\limits_0^{T}Te^{-2au}du+T\int\limits_0^{T}e^{-a| u+\tau|}e^{-a| u-\tau|}du -\int\limits_0^{T}ue^{-2au}du
 \\
 &\quad{}
 -\int\limits_0^{T}ue^{-a| u+\tau|}e^{-a| u-\tau|}du\nonumber\\
 &{} =I_1+I_2+I_3+I_4.
\nonumber
\end{align*}
Now lets us calculate the summands:
\begin{align*}
I_1&{}=T\int\limits_0^{T}e^{-2au}du=\frac{T}{2a} \bigl(1-e^{-2aT} \bigr),\\
I_2&{}=T\int\limits_0^{T}e^{-a| u+\tau|}e^{-a| u-\tau|}du\\
   &{}=T\biggl(\int \limits_0^{\tau}e^{-a( u+\tau)}e^{a( u-\tau)}du + \int\limits_\tau^{T}e^{-a( u+\tau)}e^{-a( u-\tau)}du \biggr)\\
   &{}=T \biggl(\int\limits_0^{\tau}e^{-2a\tau}du+\int\limits_\tau^{T}e^{-2au}du \biggr)\\
   &{}=T \biggl(\tau e^{-2a\tau}-\frac{1}{2a}e^{-2aT}+\frac{1}{2a}e^{-2a\tau} \biggr),\\
I_3&{}=\int\limits_0^{T}ue^{-2au}du=-\frac{T}{2a}e^{-2aT}+\frac{1}{2a}\int\limits_0^{T}e^{-2au}du\\
   &{}=-\frac{T}{2a}e^{-2aT}-\frac{1}{4a^2}e^{-2aT}+\frac{1}{4a^2},\\
I_4&{}=\int\limits_0^{T}ue^{-a| u+\tau|}e^{-a| u-\tau|}du\\
   &{}=\int\limits_0^{\tau}ue^{-a( u+\tau)}e^{a( u-\tau)}du +\int\limits_\tau^{T}ue^{-a( u+\tau)}e^{-a( u-\tau)}du\\
   &{}=\int\limits_0^{\tau}ue^{-2a\tau}du+\int\limits_\tau^{T}ue^{-2au}du\\
   &{}=\frac{\tau^2}{2}e^{-2a\tau}-\frac{T}{2a}e^{-2aT}+\frac{\tau}{2a}e^{-2a\tau}-\frac{1}{4a^2}e^{-2aT}+\frac{1}{4a^2}e^{-2a\tau}.
\end{align*}
Therefore,
\begin{align}
I&{}= \biggl(T\tau+\frac{T}{2a}-\frac{\tau^2}{2}-\frac{\tau}{2a}-
\frac
{1}{4a^2} \biggr)e^{-2a\tau}+\frac{1}{2a^2}e^{-2aT}+
\frac{T}{2a}-\frac
{1}{4a^2}
\nonumber
\\
&{} \leq \biggl(T\tau+\frac{T}{2a} \biggr)e^{-2a\tau}+
\frac{T}{2a}+\frac
{1}{2a^2}e^{-2aT}.
\nonumber
\end{align}

Thus, we obtain
\begin{align}
C_p&{}\leq \biggl(\frac{2B}{T^2} \biggr)^\frac{p} {2}\int
\limits_0^{A}
\biggl( \biggl(T\tau+\frac{T}{2a} \biggr)e^{-2a\tau}+
\frac{T}{2a}+\frac
{1}{2a^2}e^{-2aT} \biggr)^{p/2}d\tau
\nonumber
\\
&{}= (2B )^\frac {p} {2}\frac{T^{p/2}}{T^p}I_5= (2B
)^\frac{p} {2}\frac
{1}{T^{p/2}}I_5,
\nonumber
\end{align}
where $I_5=\int
_0^{A} ( (\tau+\frac{1}{2a}
)e^{-2a\tau}+\frac{1}{2a}+\frac{1}{2a^2}e^{-2aT} )^{p/2}d\tau$.
\end{ex}

\begin{ex}
Let $\mathbb{H}$ be the hypothesis that the covariance function of
a~centered measurable stationary Gaussian stochastic process equals
$\rho(\tau)=B\exp\{-a|\tau|^2\}$, where $B>0$ and $a>0$.

Similarly as in the previous example, to test the hypothesis $\mathbb
{H}$, we can use Criterion \ref{a} by selecting $\hat{\rho}_T(\tau)$
defined in (\ref{5}) as the estimator of the function~$\rho(\tau)$. Let
$0<A<\infty$. Let us find the value of the expression
\begin{align*}
I &{}=\int\limits_0^{T}(T-u) \bigl(e^{-2au^2}+e^{-a| u+\tau|^2}e^{-a|u-\tau|^2}\bigr)du\\
  &{}=\int\limits_0^{T}Te^{-2au^2}du +T\int\limits_0^{T}e^{-a| u+\tau|^2}e^{-a| u-\tau|^2}du-\int\limits_0^{T}ue^{-2au^2}du\\
  &\quad{}-\int\limits_0^{T}ue^{-a| u+\tau|^2}e^{-a| u-\tau|^2}du\\
  &{}=I_1+I_2+I_3+I_4.
\end{align*}
Now let us calculate the summands:
\begin{align*}
I_1&{}=T\int\limits_0^{T}e^{-2au^2}du\leq T\int\limits_0^{\infty}e^{-2au^2}du=\frac{\sqrt{\pi} T}{2\sqrt{2a}},\\
I_2&{}=T\int\limits_0^{T}e^{-a| u+\tau|^2}e^{-a| u-\tau|^2}du=Te^{-2a\tau^2}\int\limits_0^{T}e^{-2au^2}du\leq\frac{\sqrt{\pi} T}{2\sqrt{2a}}e^{-2a\tau^2},\\
I_3&{}=\int\limits_0^{T}ue^{-2au^2}du=-\frac{1}{4a}\int\limits_0^{T}e^{-2au^2}d\bigl(-2au^2\bigr)=-\frac{1}{4a} \bigl(e^{-2aT^2}-1\bigr),\\
I_4&{}=\int\limits_0^{T}ue^{-2a(u^2+\tau^2)}du=e^{-2a\tau^2}\int\limits_0^{T}ue^{-2au^2}du=-\frac{1}{4a}e^{-2a\tau^2}\bigl(e^{-2aT^2}-1 \bigr).
\end{align*}
Hence,
\begin{align}
I&\leq\frac{\sqrt{\pi} T}{2\sqrt{2a}}+\frac{\sqrt{\pi} T}{2\sqrt
{2a}}e^{-2a\tau^2}+\frac{1}{4a}
\bigl(e^{-2aT^2}-1 \bigr)+\frac
{1}{4a}e^{-2a\tau^2}
\bigl(e^{-2aT^2}-1 \bigr)
\nonumber
\\
& \leq T \biggl(\frac{\sqrt{\pi}}{2\sqrt{2a}}+\frac{\sqrt{\pi}}{2\sqrt
{2a}}e^{-2a\tau^2} \biggr).
\nonumber
\end{align}
Thus, we obtain
\[
C_p\leq \biggl(\frac{2B}{T^2} \biggr)^\frac{p} {2}\int
\limits_0^{A}
\biggl(T \biggl(\frac{\sqrt{\pi}}{2\sqrt{2a}}+\frac{\sqrt{\pi}}{2\sqrt
{2a}}e^{-2a\tau^2} \biggr)
\biggr)^{p/2}d\tau= (2B )^\frac {p} {2}\frac{1}{T^{p/2}}I_6,
\]
where $I_6=\int
_0^{A} (\frac{\sqrt{\pi}}{2\sqrt{2a}}+\frac
{\sqrt{\pi}}{2\sqrt{2a}}e^{-2a\tau^2} )^{p/2}d\tau$.
\end{ex}

\section*{Acknowledgments}
The authors express their gratitude to the referee and Professor Yu.
Mishura for valuable comments that helped to improve the paper.

%


\begin{thebibliography}{13}

\bibitem{B.P.}
\begin{bbook}
\bauthor{\bsnm{Bendat}, \binits{J.}},
\bauthor{\bsnm{Piersol}, \binits{A.}}:
\bbtitle{Applications of Correlation and Spectral Analysis}.
\bpublisher{John Wiley \& Sons},
\blocation{New York}
(\byear{1980})
\end{bbook}
\OrigBibText
Bendat J., Piersol A.: Applications of Correlation and Spectral
Analysis. John Wiley \& Sons New York (1980)
\endOrigBibText
\bptok{structpyb}
\endbibitem

\bibitem{B.}
\begin{barticle}
\bauthor{\bsnm{Buldygin}, \binits{V.}}:
\batitle{The properties of the empirical correlogram of Gaussian process with integrable in squared spectral density}.
\bjtitle{Ukr. Math. J.}
\bvolume{47}(\bissue{7}),
\bfpage{876}--\blpage{889}
(\byear{1995})
\bcomment{(in Russian),
\MR{1367943}}.
doi:\doiurl{10.1007/BF01084897}
\end{barticle}
\OrigBibText
Buldygin V.: The properties of the empirical correlogram of Gaussian
process with integrable in squared spectral density. Ukrainian
Mathematical Journal \textbf{47}(7), 876--889 (1995) (in Russian)
\endOrigBibText
\bptok{structpyb}
\endbibitem

\bibitem{B.K.}
\begin{bbook}
\bauthor{\bsnm{Buldygin}, \binits{V.}},
\bauthor{\bsnm{Kozachenko}, \binits{Yu.}}:
\bbtitle{Metric Characterization of Random Variables and Random Processes}.
\bpublisher{Am. Math. Soc.},
\blocation{Providence, RI}
(\byear{2000}).
\bcomment{\MR{1743716}}
\end{bbook}
\OrigBibText
Buldygin V., Kozachenko Yu.: Metric Characterization of
Random Variables and Random Processes. Amer. Math. Soc.,
Providence, RI (2000)
\endOrigBibText
\bptok{structpyb}
\endbibitem

\bibitem{B.Z.}
\begin{barticle}
\bauthor{\bsnm{Buldygin}, \binits{V.}},
\bauthor{\bsnm{Zayats}, \binits{V.}}:
\batitle{On the asymptotic normality of estimates of the correlation functions stationary Gaussian processes in spaces of continuous functions}.
\bjtitle{Ukr. Math. J.}
\bvolume{47}(\bissue{11}),
\bfpage{1485}--\blpage{1497}
(\byear{1995})
\bcomment{(in Russian),
\MR{1369560}}.
doi:\doiurl{10.1007/BF01057918}
\end{barticle}
\OrigBibText
Buldygin V., Zayats V.: On the asymptotic normality of estimates of the
correlation functions stationary Gaussian processes in spaces of
continuous functions. Ukrainian Mathematical Journal \textbf{47}(11),
1485--1497 (1995) (in Russian)
\endOrigBibText
\bptok{structpyb}
\endbibitem

\bibitem{F.}
\begin{barticle}
\bauthor{\bsnm{Fedoryanych}, \binits{T.}}:
\batitle{One estimate of the correlation function for Gaussian stochastic process}.
\bjtitle{Bull. T. Shevchenko Nat. Univ. Kyiv}
\bvolume{2},
\bfpage{72}--\blpage{76}
(\byear{2004})
\end{barticle}
\OrigBibText
Fedoryanych T.: One estimate of the correlation function for Gaussian
stochastic process. Bulletin of Taras Shevchenko National University of
Kyiv \textbf{2}, 72--76 (2004)
\endOrigBibText
\bptok{structpyb}
\endbibitem

\bibitem{I.}
\begin{barticle}
\bauthor{\bsnm{Ivanov}, \binits{A.}}:
\batitle{A limit theorem for the evaluation of the correlation function}.
\bjtitle{Theory Probab. Math. Stat.}
\bvolume{19},
\bfpage{76}--\blpage{81}
(\byear{1978}).
\bcomment{\MR{0503644}}
\end{barticle}
\OrigBibText
Ivanov A.: A limit theorem for the evaluation of the correlation
function. Theor. Probability and Math. Statist \textbf{19}, 76--81 (1978)
\endOrigBibText
\bptok{structpyb}
\endbibitem

\bibitem{D.V.}
\begin{bbook}
\bauthor{\bsnm{Jenkins}, \binits{G.}},
\bauthor{\bsnm{Watts}, \binits{D.}}:
\bbtitle{Spectral Analysis and Its Applications}.
\bpublisher{Holden Day},
\blocation{Merrifield}
(\byear{1971}).
\bcomment{\MR{0230438}}
\end{bbook}
\OrigBibText
Jenkins G., Watts D.: Spectral Analysis and Its Applications. Holden
Day, Merrifield (1971)
\endOrigBibText
\bptok{structpyb}
\endbibitem

\bibitem{K.F.}
\begin{barticle}
\bauthor{\bsnm{Kozachenko}, \binits{Yu.}},
\bauthor{\bsnm{Fedoryanych}, \binits{T.}}:
\batitle{A criterion for testing hypotheses about the covariance function of a Gaussian stationary process}.
\bjtitle{Theory Probab. Math. Stat.}
\bvolume{69},
\bfpage{85}--\blpage{94}
(\byear{2005})
\end{barticle}
\OrigBibText
Kozachenko Yu., Fedoryanych T.: A criterion for testing hypotheses
about the covarians function of a Gaussian stationary process. Theor.
Probability and Math. Statist. \textbf{69}, 85--94 (2005)
\endOrigBibText
\bptok{structpyb}
\endbibitem

\bibitem{K.K.}
\begin{barticle}
\bauthor{\bsnm{Kozachenko}, \binits{Yu.}},
\bauthor{\bsnm{Kozachenko}, \binits{L.}}:
\batitle{A test for a hypothesis on the correlation function of Gaussian random process}.
\bjtitle{J. Math. Sci.}
\bvolume{77}(\bissue{5}),
\bfpage{3437}--\blpage{3444}
(\byear{1995}).
\bcomment{\MR{1378447}}.
doi:\doiurl{10.1007/BF02367991}
\end{barticle}
\OrigBibText
Kozachenko Yu., Kozachenko L.: A test for a hypothesis on the
correlation function of Gaussian random process.
J. Math. Sci. \textbf{77}(5), 3437--3444 (1995)
\endOrigBibText
\bptok{structpyb}
\endbibitem

\bibitem{K.M.}
\begin{barticle}
\bauthor{\bsnm{Kozachenko}, \binits{Yu.}},
\bauthor{\bsnm{Moklyachuk}, \binits{O.}}:
\batitle{Pre-Gaussian random vectors and their application}.
\bjtitle{Theory Probab. Math. Stat.}
\bvolume{50},
\bfpage{87}--\blpage{96}
(\byear{1994}).
\bcomment{\MR{1445521}}
\end{barticle}
\OrigBibText
Kozachenko Yu., Moklyachuk O.: Pre-Gaussian random vectors and their
application. Theor. Probability and Math. Statist. \textbf{50}, 87--96 (1994)
\endOrigBibText
\bptok{structpyb}
\endbibitem

\bibitem{K.O.}
\begin{barticle}
\bauthor{\bsnm{Kozachenko}, \binits{Yu.}},
\bauthor{\bsnm{Oleshko}, \binits{T.}}:
\batitle{Analytic properties of certain classes of pre-Gaussian stochastic processes}.
\bjtitle{Theory Probab. Math. Stat.}
\bvolume{48},
\bfpage{37}--\blpage{51}
(\byear{1993})
\end{barticle}
\OrigBibText
Kozachenko Yu., Oleshko T.: Analytic properties of certain classes of
pre-Gaussian stochastic processes. Theor. Probability and Math.
Statist. \textbf{48}, 37--51 (1993)
\endOrigBibText
\bptok{structpyb}
\endbibitem

\bibitem{K.St.}
\begin{barticle}
\bauthor{\bsnm{Kozachenko}, \binits{Yu.}},
\bauthor{\bsnm{Stadnik}, \binits{A.}}:
\batitle{On the convergence of some functionals of Gaussian vectors in Orlicz spaces}.
\bjtitle{Theory Probab. Math. Stat.}
\bvolume{44},
\bfpage{80}--\blpage{87}
(\byear{1991}).
\bcomment{\MR{1168431}}
\end{barticle}
\OrigBibText
Kozachenko Yu., Stadnik A.: On the convergence of some functionals of
Gaussian vectors in Orlicz spaces. Theor. Probability and Math.
Statist. \textbf{44}, 80--87 (1991)
\endOrigBibText
\bptok{structpyb}
\endbibitem

\bibitem{L.I.}
\begin{bbook}
\bauthor{\bsnm{Leonenko}, \binits{N.}},
\bauthor{\bsnm{Ivanov}, \binits{A.}}:
\bbtitle{Statistical Analysis of Random Fields}.
\bpublisher{Kluwer},
\blocation{Dordrecht}
(\byear{1989}).
\bcomment{\MR{1009786}}.
doi:\doiurl{10.1007/978-94-009-1183-3}
\end{bbook}
\OrigBibText
Leonenko N., Ivanov A.: Statistical Analysis of Random Fields. Kluwer,
Dordrecht (1989)
\endOrigBibText
\bptok{structpyb}
\endbibitem


\end{thebibliography}
\end{document}